\newtheorem{theorem}{Theorem}[section]
\newtheorem{lemma}[theorem]{Lemma}
\newtheorem{proposition}[theorem]{Proposition}
\newtheorem{corollary}[theorem]{Corollary}
\newcommand{\bt}{\begin{theorem}}
\newcommand{\et}{\end{theorem}}
\newcommand{\bl}{\begin{lemma}}
\newcommand{\el}{\end{lemma}}
\newcommand{\bc}{\begin{corollary}}
\newcommand{\ec}{\end{corollary}}
\newcommand{\bp}{\begin{proposition}}
\newcommand{\ep}{\end{proposition}}
\newcommand{\bpf}{\begin{proof}}
\newcommand{\epf}{\end{proof}}
\newcommand{\be}{\begin{equation}}
\newcommand{\ee}{\end{equation}}
\newcommand{\bal}{\begin{align}}
\newcommand{\eal}{\end{align}}
\newcommand{\beq}{\begin{eqnarray}}
\newcommand{\eeq}{\end{eqnarray}}
\newcommand{\ba}{\begin{array}}
\newcommand{\ea}{\end{array}}
\newcommand{\bma}{\begin{bmatrix}}
\newcommand{\ema}{\end{bmatrix}}
\newcommand{\bi}{\begin{itemize}}
\newcommand{\ei}{\end{itemize}}
\newcommand{\comm}[1]{}
\newcommand{\ccG}{\mathcal{G}}
\newcommand{\ccW}{\mathcal{W}}
\newcommand{\cF}{{\mathcal F}}
\newcommand{\cG}{{\mathcal G}}
\newcommand{\cK}{{\mathcal K}}
\newcommand{\cW}{{\mathcal W}}
\newcommand{\wt}{\widetilde}
\newcommand{\bbR}{{\mathbb R}}
\newcommand{\bbP}{{\mathbb P}}
\newcommand{\bbS}{{\mathbb S}}
\begin{document}
\title{Ground states for exponential random graphs}

\author{Rajinder Mavi}
\email{mavi@math.msu.edu}
\homepage{math.msu.edu/~mavi} 
\address{
Department of Mathematics\\
Michigan State University\\
619 Red Cedar Road\\
C212 Wells Hall\\
East Lansing, MI 48824, USA}

\author{Mei Yin}
\email{mei.yin@du.edu}
\homepage{http://www.cs.du.edu/~meiyin/}
\address{Department of Mathematics\\
University of Denver\\
C.M. Knudson Hall, Room 300\\
2390 S. York St.\\
Denver, CO 80208, USA}

\date{\today}

\begin{abstract}
We propose a perturbative method to estimate the normalization constant in exponential random graph models as the weighting parameters approach infinity. As an application, we give evidence of discontinuity in natural parametrization along the critical directions of the edge-triangle model.

\end{abstract}

 \pacs{64.60.aq; 05.20.Gg}

\keywords{exponential random graphs; perturbation analysis; phase transitions; critical directions}

 \maketitle

\section{Introduction}
\label{subgraph}
Over the last decades, the availability of network data on typically
very large scales has created the impetus for the development of new theories and methods for
modeling and describing the properties of large networks. The introduction of exponential random graphs has
aided in this pursuit, as they are able to capture a wide variety of
common network tendencies by representing a complex global
structure through a set of tractable local features. From the point of view of extremal combinatorics and statistical mechanics, investigations have been focused on the variational principle of the limiting normalization constant, concentration of the limiting probability distribution, phase transitions, and asymptotic structures. See for example Chatterjee and Varadhan \cite{CV2011}, Chatterjee and Diaconis \cite{CD2013}, Radin and Yin \cite{RY}, Lubetzky and Zhao \cite{LZ1, LZ2}, Radin and Sadun \cite{RS1, RS2}, Radin et al. \cite{RRS}, Kenyon et al. \cite{KRRS1}, Yin \cite{Yin1}, Kenyon and Yin \cite{KY}, Aristoff and Zhu \cite{AZ2}, and Chatterjee and Dembo \cite{CD2}. The main techniques used in these papers are variants of statistical physics, but the elegant theory of graph limits as developed by Lov\'{a}sz and coauthors (V.T. S\'{o}s, B. Szegedy, C. Borgs, J. Chayes, K. Vesztergombi, ...) \cite{BCLSV1, BCLSV2, BCLSV3, Lov, LS}, also plays an important role in the interdisciplinary inquiry. Building on earlier work of Aldous \cite{Aldous1} and Hoover \cite{Hoover}, the graph limit theory connects sequences of graphs to a unified graphon space equipped with a cut metric. Though the theory itself is tailored to dense graphs, parallel theories for sparse graphs are likewise emerging. See Benjamini and Schramm \cite{BS}, Aldous and Steele \cite{AS}, Aldous and Lyons \cite{AL}, and Lyons \cite{Lyons} where the notion of local weak convergence is discussed and the recent works of Borgs et al. \cite{BCCZ1, BCCZ2} that are making progress towards enriching the existing $L^\infty$ theory of dense graph limits by developing a limiting object for sparse graph sequences based on $L^p$ graphons.

In this paper, we study the ``standard'' family of exponential graph models in the asymptotic regime as the exponential parameters approach infinity. As the model name indicates, we are associating exponential weights to graphical ensembles. For each $n$ let $\cG_n$ be the ensemble of simple graphs on $n$ vertices and let $\cG = \cup_n \ccG_n$ be the collection on all simple graphs. The exponential weights are defined in terms of  subgraph densities. For any $H \in \cG$ the homomorphism density of $H $ in a graph $G \in \cG$ is defined as the probability that a random map on the vertex set of $H$ into the vertex set of $G$, $V(H) \to V(G)$ is edge preserving.
We write the homomorphism density as
\begin{equation}
\label{t} t(H, G)=\frac{|\text{hom}(H,G)|}{|V(G)|^{|V(H)|}}.
\end{equation}
Now, for all $n$ we define a probability distribution on $\cG_n$ in terms of the homomorphism density. Let $H_1,..,H_d$ be a given selection of simple graphs, where $H_1 $ is an edge: $H_1 = K_2$.  Let $T: \cG_n \to [0,1]^d$, where the components of $T$ are homomorphism densities $\{T(G_n)\}_i=t(H_i, G_n)$. Given $\beta \in \bbR^{d}$, define the functional on $G_n \in \ccG_n$:
\begin{equation}\label{ham}
 T^\beta(G_n) := \beta \cdot T(G_n) =  \sum_{i = 1}^{d} \beta_i t(H_i, G_n) ,
\end{equation}
and weight $G_n \in \cG_n$ by $e^{n^2 T^\beta(G_n)}$. The normalization constant for the ensemble $\cG_n$ is then given by the partition function,
\begin{equation}\label{ptfn}
  Z^\beta_n = \sum_{G_n \in \ccG_n} \exp \left( n^2 T^{\beta} (G_n)\right).
\end{equation}
The terminology for the partition function is borrowed from thermodynamics. In this context, $|\beta|$ is the inverse temperature.
Renormalizing $T^\beta$ in (\ref{ham}) and fixing $\beta/|\beta|$, one obtains a Hamiltonian $H(G_n)  = - \frac{n^2}{|\beta|} T^{\beta}(G_n)$.   As $n\to \infty$ the major contribution to the partition function  concentrates around the thermal states (labeled by $f^\beta$),  in a manner analogous to standard thermodynamic models (see Section \ref{ierg}). These thermal states are well understood in the large temperature regime $|\beta|\rightarrow 0$. For any selection of subgraphs $H_1,..,H_d$ and $|\beta|$ sufficiently small, the associated thermal state lies in the replica symmetric phase \cite{CD2013}, i.e., $f^\beta \equiv \rho$. On the other hand, the ground states, which are defined as the limit of the thermal states $f^\beta$ as $|\beta | \to \infty$, are not so simple. In some cases the ground state is known to be in the replica symmetric phase \cite{CD2013}, while in other cases the ground state concentrates around a simple graph in $\cG$ \cite{YRF}.

Our motivation for this paper comes from the edge-triangle model, obtained by setting $d = 2$ and $H_2 $ a triangle: $H_2 = K_3$. It was shown in \cite{YRF} that there are countably many critical directions of $\beta$, along which the ground state of the model is chosen from finitely many simple graphs with some unknown distribution, and our goal is to develop a mechanism that determines which of these simple graphs is the proper ground state.

\subsection{Graphon topology}
\label{bkgd}
The thermal states $f^\beta$ belong to the space of graph functions ``graphons" which may be understood as generalizations of graphs. The set of graphs $\ccG$ may be embedded into the space of graphons which consist of symmetric measurable functions from $[0,1]^2$ into $[0,1]$,
\begin{equation}
\ccW = \{f:[0,1]^2 \to [0,1] \text{ and } f \text{ symmetric}\}.
\end{equation}
For any $n $ and a graph $G_n \in \ccG_n$, the  graphon representation is the function
\begin{equation}
\label{graphon}
f^{G_n}(x,y)=\left\{%
\begin{array}{ll}
    1, & \hbox{if $(\lceil nx \rceil, \lceil ny \rceil)$ is an edge in $G_n$,} \\
    0, & \hbox{otherwise,} \\
\end{array}%
\right. \quad (x,y) \in [0,1]^2
\end{equation}
where the interval $[0,1] $ may be intuitively thought of as a `continuum' of graph vertices. The distance between graphons is given in terms of the ``cut distance'', defined for $f,h \in \cW$, as
\begin{equation}
\label{cut}
d_{\Box}(f, h)=\sup_{S, T \subseteq [0,1]}\left|\int_{S\times
T}\left(f(x, y)-h(x, y)\right)dx\,dy\right|.
\end{equation}
However, a nontrivial difficulty arrises from the arbitrary labeling of vertices as they are embedded in $[0,1]$. Thus we introduce the equivalence $f \sim f_\sigma$ where $f_\sigma(x,y)  =   f(\sigma x, \sigma y)$, where $\sigma: [0,1] \to [0,1]$ is any measure preserving bijection. We write the quotient space of  graphons under the equivalence $\sim$ as $\widetilde \cW$, and the equivalance class under $\sim$ of $f\in\cW$ as $\wt f$. Incorporating the equivalence relation $\sim$ yields a distance
\begin{equation}
\label{cut2}
\delta_{\Box}(\tilde{f},
\tilde{h})=\inf_{\sigma_1, \sigma_2}d_{\Box}(f_{\sigma_1},
h_{\sigma_2}),
\end{equation}
 where the infimum ranges over all measure
preserving bijections $\sigma_1$ and $\sigma_2$, making
$(\wt\cW, \delta_{\Box})$ a compact metric space (see Section 9.3 of Lov\'{a}sz \cite{Lov}).
With some abuse of notation we also refer to $\delta_{\Box}$ as
the ``cut distance''.

All graphons arise as the limit of some sequence of graphs. Given a graphon $f \in \cW$ one may construct a graph $G_n = G(n, f)$ by selecting iid points $x_1,..,x_n$ uniformly from $[0,1]$ which represent the vertices of $G_n$, and then connect vertices $i,j$ with probability $f(x_i, x_j)$. In this context the expected subgraph density is given by the graphon homomorphism density
\begin{equation}
\label{tt} t(H, f)=\int_{[0,1]^k}\prod_{\{i,j\}\in E(H)}f(x_i, x_j)dx_1\cdots dx_k,
\end{equation}
which generalizes (\ref{t}) and is continuous in the metric $\delta_{\Box}$.
  Indeed, for any graph $H$, the subgraph homomorphism density of random graphs selected by the above construction converges almost surely to the graphon homomorphism density,
\begin{equation}
\lim_{n\rightarrow \infty} t(H, G(n,f))  = t(H,f).
\end{equation}

\subsection{Exponential random graphs}\label{ierg}
 As discussed above, we will define  measures on $\cG_n$ in terms of subgraph densities. For $G_n \in \cG_n$ define the probability
\begin{equation}
\label{pmf2}
\bbP_n^\beta(G_n)=\exp\left(n^2(T^\beta(G_n)) - \psi_n^\beta\right),
\end{equation}
 where we have introduced the normalization   constant (free energy density),
\begin{equation}
\label{psi2} \psi_n^\beta=\frac{1}{n^2}\log Z_n^\beta.
\end{equation}
By replacing the subgraph homomorphism density (\ref{t}) with graphon homomorphism density (\ref{tt}) in (\ref{ham}), $T^\beta$ extends naturally to $(\wt\cW,\delta_{\Box})$.
Since $T^\beta$ is continuous and bounded on the compact set $\wt \cW$, there is a nonempty compact subset $\cK^\beta$ of $\wt \cW$ so that $T^\beta$ is maximized on $\cK^\beta$. Take $I:[0,1] \to \bbR $ as
\begin{equation}
\label{I}
I(u) =  \frac12 u\log  u + \frac12(1 - u) \log  (1 - u ),
\end{equation}
and then extend the domain of $I$ to $\wt\cW$ by
\begin{equation}
I(\tilde{f}) = \int_{[0,1]^2} I (f(x,y)) dxdy,
\end{equation}
where $f$ is any representative element of $\tilde{f}$.
It follows from Lemma 2.1 in
   Chaterjee and Varadhan \cite{CV2011} that $\psi^\beta(\wt f):= T^\beta(\wt f) - I(\wt f)$
   is well defined on $\wt \cW$ and upper semi-continuous under the cut metric $\delta_\Box$. Let $\cF^\beta$ be the subset of $\wt \cW$ where $\psi^\beta$ is maximized. Then like $\cK^\beta$, $\cF^\beta$ is a nonempty compact subset of $\wt \cW$.

For the purpose of this paper, two theorems from Chatterjee and Diaconis \cite{CD2013} (both based on the large deviation result in \cite{CV2011}) merit some
special attention. Together they connect the occurrence of a phase
transition in the exponential random graph model with the solution of a certain
maximization problem. The first theorem (Theorem 3.1 in \cite{CD2013}) states that for any $\beta$, the limiting normalization constant $\lim_{n\to \infty} \psi_n^\beta$ of the exponential random graph always exists and is equal to $\psi^\beta$. The second theorem (Theorem 3.2 in \cite{CD2013}) states that in the large $n$ limit, the quotient image
$\tilde{f}^{G_n}$ of a random graph $G_n$ drawn from (\ref{pmf2})
must lie close to $\cF^\beta$ with high probability,
\begin{equation}
\delta_{\Box}(\tilde{f}^{G_n}, \cF^\beta)\to 0\hbox{ in
probability as } n\to \infty.
\end{equation}
Since the limiting normalization constant $\psi^\beta$ is the generating function for the limiting
expectations of other random variables on the graph space such as expectations and
correlations of homomorphism densities, a phase transition occurs when $\psi^\beta$ is non-analytic
or when $\cF^\beta$ is not a singleton set. Although it is difficult to evaluate $\psi^\beta$ and determine the maximizing set $\cF^\beta$ for most $\beta$, we will derive an efficient method to approximate $\cF^\beta$ and estimate $\psi^\beta$ for $\beta$ sufficiently far from the origin.

\section{The approximation scheme}
\label{approximation}
Take a finite simple graph $H$ with vertex set $V(H)$ and edge set $E(H)$. Consider a graphon $f \in \cW$. For
each $(r, s) \in E(H)$ and each pair of points $x_r, x_s \in [0, 1]$, define
\begin{equation}
\Delta_{H, r, s}f(x_r, x_s):=\int_{[0, 1]^{\left|V(H)\texttt{\char92}\{r, s\}\right|}}\prod_{\substack{(r', s')\in E(H)\\ (r', s')\neq (r, s)}}f(x_{r'}, x_{s'})\prod_{\substack{v\in V(H)\\ v\neq r, s}}dx_v.
\end{equation}
The above definition appears rather complicated, but in essence may be identified with the homomorphism density (\ref{tt}), if we remove edge $(r, s)$ from $H$ and do not integrate out the associated vertices $x_r$ and $x_s$. For $(x, y) \in [0, 1]$, define
\begin{equation}
\label{Delta}
\Delta_H f(x, y):=\sum_{(r, s)\in E(H)} \Delta_{H, r, s} f(x, y),
\end{equation} 
which corresponds to the total homomorphism density generated after all possible ways of removing one edge from $H$. We give some examples to illustrate this idea. When $H$ is an edge, $\Delta_H f(x, y) \equiv 1$. When $H$ is a triangle, by symmetry, $\Delta_H f(x, y)=3\int_{[0, 1]}f(x, z)f(y, z)dz$.

\subsection{Motivation}
\label{motivation}
We will consider as a motivating example the edge-triangle model, which is a $2$-parameter exponential random graph model obtained from (\ref{pmf2}) by setting  $H_1$ to  an edge and $H_2$ to  a triangle. Take $\beta=ru$ for $u=(u_1, u_2)\in \bbS$. Suppose that $u_1>0$ and $u_2<0$, i.e., the $2$-dimensional parameter vector $u$ is pointing towards the $4$th quadrant. Then $\cK^u$ (which agrees with $\cK^\beta$) consists of graphons $f$ that minimize $(u_1/u_2) e+t$, where $e=t(H_1, f)$ denotes the edge density and $t=t(H_2, f)$ denotes the triangle density of $f$, respectively. This implies that $f$, the maximizers of $T^u$ (and hence of $T^\beta$) must lie on the Razborov curve, which is the lower boundary of the feasible region $R$ of edge-triangle homomorphism densities. As $u_1 e+u_2 t$ is a linear function, $f$ must minimize over the convex hull $P$ of $R$. Since $R$ and $P$ only intersect at the points corresponding to Tur\'{a}n graphons, $f$ must be a Tur\'{a}n graphon. This important fact about the structure of $\cK^\beta$ was further used to derive the maximizing graphons of $\cF^\beta$ in \cite{YRF}. Using the boundedness of $I$, we found that $\cF^\beta$ consists of graphons that can be made arbitrarily close to Tur\'{a}n graphons when the magnitude of $\beta$ is sufficiently large, and exactly which Tur\'{a}n graphon is favored by $\cF^\beta$ depends on the direction of $\beta$. However, as nice as these results are, there is ambiguity concerning the optimal Tur\'{a}n graphon along the critical directions of $\beta$, which correspond to normal lines of the convex hull $P$. The subtlety might be due to the fact that Tur\'{a}n graphons, though close to our optimal graphons in cut distance, are not best approximations to the maximizing set $\cF^\beta$. Generalizing from Tur\'{a}n graphons \cite{HN}, we say that a graphon $f$ is random-free if $f=\chi_A$ for some symmetric measurable subset $A$ of $[0,1]^2$. When $\beta$ is finite, Chatterjee and Diaconis \cite{CD2013} showed that the maximizing graphons in $\cF^\beta$ are almost nowhere random-free -- that is the set $\{x\in[0,1]^2: f(x) \in \{0,1\}\}$ has zero measure. We are thus interested in finding coarse-grained graphons that are not random-free but close to $\cF^\beta$ in cut distance, as they are sufficient to distinguish between candidates for the ground state. Resorting to perturbation analysis in the $|\beta| \to \infty$ regime, we will propose a method that keeps track of only the most significant characteristics of the maximizing graphons and  demonstrate its effectiveness.

\subsection{Assumptions}
\label{assumptions}
Let $R$ denote the range of $T$ in $\bbR^d$ and suppose that the boundary $\partial R$ is piecewise analytic. For $w \in \partial R$, let $F_R(w)$ denote the set of all feasible directions of $R$ at $w$, so that $v \in F_R(w)$ implies that there is $\epsilon > 0$ with $w + \epsilon v \in R$. The (internal) tangent cone $C_R(w)$ of $R$ at $w$ is then given by the closure of $F_R(w)$. See Figure \ref{et} for an illustration of this concept in the edge-triangle model. Let $\beta=r u$ for $u \in \bbS^{d-1}$, where $r$ is sufficiently large. Suppose that $T^u$ (and hence $T^\beta$ as $|\beta| \to \infty$) is maximized at a set of  random-free graphons $f =\chi_A$, i.e., $\cK^u$ (which agrees with $\cK^\beta$) consists of random-free graphons only. Further suppose that $f \in \cK^u$ has the property that
      \be \label{derivative1}
         \Delta_{H_i} f =  a_{i}\chi_{A}+b_{i}\chi_{B},
          \ee
    where $\Delta_{H_i}f$ is defined as in (\ref{Delta}) and $B:=[0, 1]^2 \texttt{\char92} A$. We remark that these nice properties that we assumed are enjoyed beyond the edge-triangle model. Denote by $K_k$ a complete graph on $k$ vertices. Since the vertices of the convex hull $P$ of $R$ for $K_2$ and $K_n$ (any $n$), not just $K_2$ and $K_3$ as in the edge-triangle case, are given by Tur\'{a}n graphons, our argument will run through without much modification in these cases. Utilizing geometry of their respective convex hull \cite{EN}, similar analysis may be extended to more general models. We point out in particular that if $f$ is a Tur\'{a}n graphon, then (\ref{derivative1}) holds for any $H_i$ due to symmetry.

\begin{figure}
\centering
\includegraphics[clip=true, height=3in]{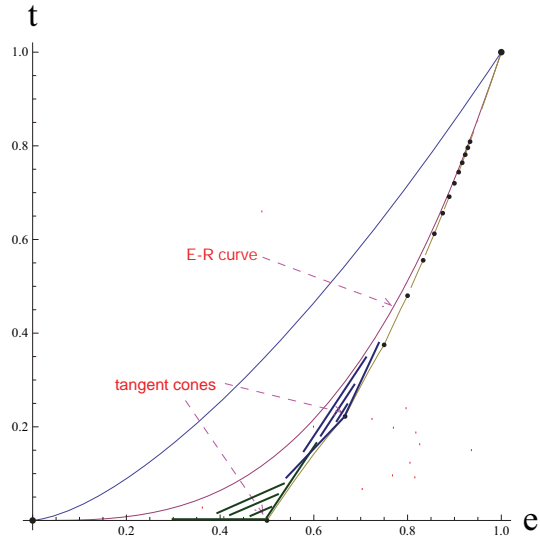}
\caption{Region of attainable edge ($e$) and triangle ($t$) densities for graphons. The upper
boundary is the curve $t=e^{3/2}$ and the lower boundary is a piecewise algebraic curve with
infinitely many concave pieces; see \cite{Razborov}. The tangent cones at complete bipartite graphon (Tur\'{a}n graphon with $2$ classes) and complete tripartite graphon (Tur\'{a}n graphon with $3$ classes) are displayed.} \label{et}
\end{figure}

\subsection{Perturbation of random-free graphons}
\label{perturb}
Consider a maximizing graphon $h \in \cF^\beta$. When the magnitude of $\beta$ is sufficiently large, since $T^\beta$ diverges while $I$ stays bounded, $\cF^\beta$ and $\cK^\beta$ can be made arbitrarily close in the cut metric. From our assumptions in Section \ref{assumptions}, there thus exists a random-free graphon $f=\chi_A$ that is close to $h$ under the cut distance. We will further construct a non-random-free graphon $X_h$ that is close to $h$, simple enough, and yet still retains important information of $h$. Most importantly, we will show that for any finite simple graph $H_i$, $t(H_i, X_h)$ approximates $t(H_i, h)$ at least as well as $t(H_i, f)$ asymptotically. The following proposition is specific for random-free graphons and will be useful for our investigation.

\begin{proposition}
\label{prop}
Let $||\cdot||_1$ denote the $L^1$-norm. For a random-free graphon $f$, $d_\Box(f, h)=O(||f-h||_1)$.
\end{proposition}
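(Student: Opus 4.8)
The plan is to read the inequality directly off the two definitions. Unwinding the definition (\ref{cut}) of the cut distance, for any measurable $S,T\subseteq[0,1]$ I would pull the absolute value inside the integral and then enlarge the region of integration, using $|f-h|\ge 0$:
\[
\left|\int_{S\times T}(f(x,y)-h(x,y))\,dx\,dy\right|\le\int_{S\times T}|f(x,y)-h(x,y)|\,dx\,dy\le\int_{[0,1]^2}|f-h|=\|f-h\|_1.
\]
The right-hand side does not depend on $S$ or $T$, so taking the supremum over all such pairs yields $d_\Box(f,h)\le\|f-h\|_1$, which is the asserted bound with implied constant $1$. This step uses nothing about $f$.

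I would then record the companion estimate, which is where random-freeness actually enters and explains why the proposition is stated for such $f$: it shows the bound above is sharp up to a constant depending only on $f$. Writing $f=\chi_A$ and $B=[0,1]^2\setminus A$, on $A$ we have $f-h=1-h\ge 0$ while on $B$ we have $f-h=-h\le 0$, so $f-h$ is sign-definite on each of $A$ and $B$, and hence $\|f-h\|_1=\int_A(f-h)-\int_B(f-h)$. When $A$ is a finite union of product boxes $V_i\times V_j$ drawn from a partition of $[0,1]$ into $k$ parts $V_1,\ldots,V_k$ on whose products $f$ is constant — the case relevant here, since the graphons singled out by (\ref{derivative1}) are of Tur\'{a}n type — each $\int_{V_i\times V_j}(f-h)$ is bounded in absolute value by $d_\Box(f,h)$, by taking $S=V_i$ and $T=V_j$ in (\ref{cut}); summing over the at most $k^2$ boxes gives $\|f-h\|_1\le k^2\,d_\Box(f,h)$, so that $d_\Box(f,h)\asymp\|f-h\|_1$. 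In particular a graphon that is cut-close to a random-free $f$ is also $L^1$-close to it, which is the form in which the statement is applied to $h\in\cF^\beta$.

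The estimate in the stated direction is a direct consequence of the definitions, so there is no real obstacle there; the main point worth flagging is that the random-free hypothesis is indispensable for the companion comparison. For a general graphon $f$ the cut distance can be arbitrarily smaller than the $L^1$ distance — the step graphon of an Erd\H{o}s--R\'enyi graph $G(n,1/2)$ lies at cut distance $O(n^{-1/2})$ from the constant graphon $\tfrac12$ yet at $L^1$ distance $\tfrac12$ — so any reverse bound must degrade with the number of parts of $f$, and it is exactly this number that assumption (\ref{derivative1}) keeps bounded.
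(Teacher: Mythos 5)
Your proof of the stated inequality $d_\Box(f,h)\le\|f-h\|_1$ is exactly the paper's (it is immediate from (\ref{cut}) and, as you note, uses nothing about $f$), and your companion reverse estimate follows the same route as the paper's: decompose $A$ and its complement into product rectangles and bound each $\left|\int_{V_i\times V_j}(f-h)\right|$ by $d_\Box(f,h)$. The only divergence is in scope: the paper claims the reverse bound for an arbitrary random-free $f$ by approximating $A$ with finitely many disjoint rectangles, whereas you restrict to the finite-partition (Tur\'{a}n-type) case with the explicit constant $k^2$; your Erd\H{o}s--R\'{e}nyi example correctly shows that the constant must depend on the complexity of $A$, which is precisely the point the paper's closing sentence (``the general conclusion follows\dots'') glosses over.
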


\begin{proof}
By (\ref{cut}), it is clear that $d_\Box(f, h) \leq ||f-h||_1$. For the other direction, suppose $f=\chi_A$ for some symmetric measurable subset $A \subset [0, 1]^2$. If $A$ is a rectangle, then $||f-h||_1 \leq 3d_\Box(f, h)$. The general conclusion follows once we recognize that any subset $A$ of $[0, 1]^2$ may be approximated within $\epsilon>0$ by a finite union of disjoint rectangles.
\end{proof}

Let us first expand $t(H_i, h)$ around $f$. Denote a perturbation of $f$ by $g=h-f$. Corresponding to the regime $|\beta| \to \infty$, as explained earlier, $f$ may be chosen so that $|g| \in \cW$. We have
\begin{equation}
\label{exph}
t(H_i, h)=t(H_i, f)+\int_{[0, 1]^2}g(x, y)\Delta_{H_i} f(x, y) dxdy+\text{remainder terms}.
\end{equation}
Under our assumption (\ref{derivative1}), we compute the first variation:
\begin{eqnarray}
\label{middle}
\int_{[0, 1]^2}g(x, y)\Delta_{H_i} f(x, y) dxdy&=&\int_{[0, 1]^2} \left(g(x, y)\chi_A+g(x, y)\chi_B\right)\left(a_i\chi_A+b_i\chi_B\right)dxdy\\\nonumber
&=&a_i\int_A g(x, y)dxdy+b_i\int_B g(x, y)dxdy.
\end{eqnarray}
Define
\begin{equation}
\label{ab}
a=\frac{1}{|A|} \int_A h(x, y) dxdy,
               \hspace{.2in}  b=\frac{1}{|B|} \int_B h(x, y) dxdy.
\end{equation}
Using that $f=\chi_A$ and $g=h-f$, (\ref{middle}) reduces to
\begin{equation}
\int_{[0, 1]^2}g(x, y)\Delta_{H_i} f(x, y) dxdy=a_i(a-1)|A|+b_ib|B|.
\end{equation}
The remainder terms (if they exist) give the homomorphism density after all possible ways of removing at least two edges from $H_i$, and so are bounded above by either $\int_{[0, 1]^3} |g(x, y)||g(x, z)|dxdydz$ or $\int_{[0, 1]^4} |g(x, y)||g(u, v)|dxdydudv$. Estimating the latter is easy.
\begin{equation}
\int_{[0, 1]^4} |g(x, y)||g(u, v)|dxdydudv=\left(\int_{[0, 1]^2}|g(x, y)|dxdy\right)^2=||g||_1^2,
\end{equation}
which is of negligible asymptotic order when compared to the first variation (\ref{middle}). The former corresponds to the $2$-star density of a graphon $|g|$ with edge density $||g||_1$. By \cite{AK}, for small $||g||_1$, the $2$-star density is bounded below by $||g||_1^2$ and above by $||g||_1/2+O(||g||_1^2)$, and the upper bound is achieved when $|g|$ is an anticlique of the form
\begin{equation}
|g(x,y)|=\left\{%
\begin{array}{ll}
    1, & \hbox{if $x>c$ and $y>c$,} \\
    0, & \hbox{otherwise,} \\
\end{array}%
\right.
\end{equation}
where $c=1-\sqrt{1-||g||_1}$. The lower bound for this remainder term is of desirable asymptotic order, but the upper bound is of the same order as the first variation. Recall that the maximizing graphon $h$ for a finite $\beta$ is uniformly bounded away from $0$ and $1$ \cite{CD2013}. The graphon $|g|=|h-f|$ is thus likely quite different from an anticlique. This implies that the $2$-star density of $|h-f|$ is of higher order than $||h-f||_1$ and does not achieve the upper bound. The phenomenon was confirmed for example by simulations for the edge-triangle model \cite{KRRS2}.

Define the averaged perturbation by $X_h = a\chi_A + b \chi_B $, where $a$ and $b$ are given in (\ref{ab}). $X_h$ may be viewed as a flattened out version of $h$. Since
\begin{equation}
\label{agree}
||h-f||_1=\int_A (1-h(x, y))dxdy+\int_B h(x, y)dxdy=(1-a)|A|+b|B|=||X_h-f||_1,
\end{equation}
$X_h$ is close to $f$ (and hence $h$) under the $L^1$ distance, and by Proposition \ref{prop}, also under the cut distance. Denote by $g'=X_h-f$. We perform the same expansion for $t(H_i, X_h)$ around $f$ as in the last paragraph:
\begin{equation}
\label{expXh}
t(H_i, X_h)=t(H_i, f)+\int_{[0, 1]^2}g'(x, y)\Delta_{H_i} f(x, y) dxdy+\text{remainder terms}.
\end{equation}
Following similar reasoning as in (\ref{middle}) and using the definition of $X_h$,
\begin{eqnarray}
\int_{[0, 1]^2}g'(x, y)\Delta_{H_i} f(x, y) dxdy&=&a_i \int_A g'(x, y)dxdy+b_i \int_B g'(x, y) dxdy\\\nonumber&=&a_i(a-1)|A|+b_ib|B|.
\end{eqnarray}
This says that $t(H_i, h)$ and $t(H_i, X_h)$ agree except for the remainder terms. As for $t(H_i, h)$, the remainder terms for $t(H_i, X_h)$ (if they exist) are bounded above by either $\int_{[0, 1]^3} |g'(x, y)||g'(x, z)| dxdydz$ or $\int_{[0, 1]^4} |g'(x, y)||g'(u, v)| dxdydudv$. Since $||g'||_1=||g||_1=(1-a)|A|+b|B|$ by (\ref{agree}), the latter is of asymptotic order $||g||_1^2$; while the former gives
\begin{equation}
\int_{[0, 1]^3} |g'(x, y)||g'(x, z)| dxdydz\leq \left(\max\{1-a, b\}\right)^2,
\end{equation}
and so is also of asymptotic order $||g||_1^2$. We conclude that $t(H_i, X_h)$ gives at least as good an asymptotic approximation for $t(H_i, h)$ as $t(H_i, f)$, and a better one when the error term related to the $2$-star density of $|h-f|$ may be dropped.

\subsection{Maximizing graphons}
\label{maximizing}
Let $\beta=r u$ for $u \in \bbR^d$ and $\|u\|=1$. Under our assumptions, $\cK^u$ (which agrees with $\cK^\beta$) is a set of random-free graphons. Take $h\in \cF^{\beta}$. Then for $\epsilon > 0$ and $r$ large (corresponding to $|\beta| \to \infty$), $d_\Box(f, h)<\epsilon$ for some $f \in \cK^u$, i.e., $f=\chi_A$ for some symmetric measurable subset $A$ of $[0, 1]^2$. By (\ref{exph}) and following analysis,
\begin{equation}
\psi^{\beta}(h)=rT^{u}(f)+r\left(\sum_{i=1}^d a_iu_i\right)(a-1)|A|+r\left(\sum_{i=1}^d b_iu_i\right)b|B|-I(h)+\text{remainder terms}.
\end{equation}
Similarly, by (\ref{expXh}) and following analysis,
\begin{equation}
\psi^{\beta}(X_h)=rT^{u}(f)+r\left(\sum_{i=1}^d a_iu_i\right)(a-1)|A|+r\left(\sum_{i=1}^d b_iu_i\right)b|B|-I(X_h)+\text{remainder terms}.
\end{equation}
In both equations above, the entropy $I$ is bounded in contrast with the energy contribution $T^\beta$. Except that $h$ is close to $f$ (and hence $X_h$) in cut distance, we do not have enough information regarding the structure of $h$. Rather than maximizing $\psi^\beta(h)$ over all possible graphons $h$ directly, we will maximize $\psi^\beta(X_h)$ over $2$-parameter families $0 \leq a, b \leq 1$. From the heuristics in Section \ref{perturb}, this is an effective method to approximate the optimal graphon. More than that, in certain situations (for example the edge-triangle model to be discussed in detail in Section \ref{edgetriangle}), we will see that keeping $2$ parameters is not only effective but also sufficient. Notice that
\begin{equation}
\label{entropyapprx}
I(X_h)=I( a\chi_A + b \chi_B   )=  I(a  )  |A| + I(b) |B|.
\end{equation}
We rewrite $X_h$:
\begin{equation}
\label{flat}
\psi^{\beta}(X_h)=rT^{u}(f)+\left(r\left(\sum_{i=1}^d a_iu_i\right)(a-1)-I(a)\right)|A|+\left(r\left(\sum_{i=1}^d b_iu_i\right)b-I(b)\right)|B|+\text{remainder terms}.
\end{equation}
Maximizing each first variation term, we have
\begin{equation}
\label{optimal}
a = \frac{1}{1 + e^{ - 2r \sum_{i=1}^d a_iu_i} }
               , \hspace{.2in}    b = \frac{1}{1 + e^{ - 2r \sum_{i=1}^d b_iu_i} }.
\end{equation}
Under this choice of $a$ and $b$ and provided we can ignore the remainder terms, $\psi^{\beta}(X_h)$ is strictly bigger than $rT^u(f)$, which is the random-free graphon approximation corresponding to $a=1$ and $b=0$. Let us verify that $\psi^\beta(X_h)$ is indeed strictly bigger than $rT^u(f)$ by rigorously managing the error term in (\ref{flat}). As shown earlier in Section \ref{perturb}, the remainder terms are of higher order:
\begin{multline} \label{ruZ3}
      \psi^{\beta}(X_h)=rT^{u}(f)+\left(r\left(\sum_{i=1}^d a_iu_i\right)(a-1)+O(r(a-1)^2)-I(a)\right)|A|\\+\left(r\left(\sum_{i=1}^d b_iu_i\right)b+O(rb^2)-I(b)\right)|B|.
\end{multline}
For any $\epsilon > 0$, there exists large enough $r$ so that $||X_h-f||_1$ is sufficiently small, making $0<1-a<\epsilon$ and $0<b<\epsilon$. Applying these $\epsilon$ bounds in (\ref{ruZ3}) gives
      \be \label{ruZ5}
      \psi^{\beta}(X_h) \geq  rT^{u}(f)+\left(r\left(\sum_{i=1}^d a_iu_i+\epsilon\right)(a-1)-I(a)\right)|A|+\left(r\left(\sum_{i=1}^d b_iu_i-\epsilon\right)b-I(b)\right)|B|.
     \ee
Maximizing each first variation term as previously, this yields $\psi^{\beta}(X_h)>rT^u(f)$.  We conclude that, as expected, the addition of one more parameter improves the random-free graphon estimation.

\section{The edge-triangle model}
\label{edgetriangle}
Denote by $K_k$ a complete graph on $k$ vertices. The edge-triangle model is a $2$-parameter exponential random graph model obtained by taking $H_1$ an edge ($K_2$) and $H_2$ a triangle ($K_3$) in (\ref{pmf2}). Consider the set $R=\{(t(K_2, f), t(K_3, f)), f\in \cW\}$ of all realizable values of the edge ($e$) and triangle ($t$) homomorphism densities as the graphon $f$ varies over the entire graphon space $\cW$. See Figure \ref{et}. The upper boundary curve of $R$ is given by the equation $t=e^{3/2}$, and can be derived using the Kruskal-Katona theorem (see Section 16.3 of \cite{Lov}). The lower boundary curve is trickier. The trivial lower bound of $t=g_1(e):=0$, corresponding to the horizontal segment, is attainable at any $0 \leq e \leq 1/2$ by graphons describing the possibly asymptotic edge density of subgraphs of complete bipartite graphs (Tur\'{a}n graphon with $2$ classes). For $e \geq 1/2$, the optimal bound was obtained by Razborov \cite{Razborov}, who
established, using the flag algebra calculus, that for $(k-1)/k
\leq e \leq k/(k+1)$ with $k \geq 2$,
\begin{equation}
\label{Ra} t \geq g_k(e):=\frac{(k-1) \left(k-2\sqrt{k(k-e(k+1))}\right)
\left(k+ \sqrt{k(k-e(k+1))} \right)^2}{k^2 (k+1)^2}.
\end{equation}
All the curve segments $g_k(e)$ describing the nontrivial part of the lower boundary of $R$ are strictly concave. For $k=1, 2, \ldots$, we set $v_k:=(e_k, t_k)=(t(K_2, f^{K_{k+1}}), t(K_3, f^{K_{k+1}}))$, where explicitly,
\begin{equation}
f^{K_{k+1}}(x, y)=\left\{%
\begin{array}{ll}
    1, & \hbox{if $\lceil (k+1)x\rceil \neq \lceil (k+1)y \rceil$,} \\
    0, & \hbox{otherwise,} \\
\end{array}%
\right. \quad (x,y) \in [0,1]^2
\end{equation}
is the Tur\'{a}n graphon with $k+1$ classes. Thus
\begin{equation}
e_k=\frac{k}{k+1}, \hspace{.2in} t_k=\frac{k(k-1)}{(k+1)^2}.
\end{equation}
For $k=1, 2, \ldots$, let $L_k$ be the line segment joining vertices $v_k$ and $v_{k+1}$ of neighboring Tur\'{a}n graphons. These infinitely many line segments form the convex hull $P$ of $R$, and the length of $L_k$ decreases monotonically to zero as $k$ gets large.

The normal vectors to $L_k$,
\begin{equation}
\label{ok}
o_k=\left(1, -\frac{(k+1)(k+2)}{k(3k+5)}\right)
\end{equation}
are the critical directions of the edge-triangle model. Let $\beta=ro_k$ and take $r\rightarrow \infty$. While the vectors $o_k$ (\ref{ok}) are not normalized as in our derivation (see Section \ref{approximation}), this can be easily adjusted by adapting $r$. Then Tur\'{a}n graphons with $k+1$ and $k+2$ classes both belong to $\cK^{o_k}$ (which agrees with $\cK^\beta$), and we concluded in \cite{YRF} that a typical graph sampled from the model may behave like either a Tur\'{a}n graphon with $k+1$ classes or a Tur\'{a}n graphon with $k+2$ classes, with no clear preference. Though already quite informative, as explained earlier in Section \ref{motivation}, this result remains somewhat unsatisfactory because it does not indicate whether both such graphons are actually realizable in the limit and in what manner. Notice that underneath our investigation, there is an ordered double asymptotic framework, in the sense that the network size $n$ goes to infinity first followed by the divergence of the parameters $\beta$. In hope of resolving this rather subtle ambiguity within the edge-triangle model, the ``other'' order was also examined in \cite{YRF}, where we first let the magnitude of $\beta$ increase to infinity so as to isolate a simpler sub-model and then study its limiting properties as $n$ grows. Both ordered asymptotics~imply a nearly identical convergence in probability in the cut metric along the noncritical directions. Under the ``original'' as well as the ``reversed'' order, there exist (possibly different) subsequences of the form $\{n_i, \beta_{1,i}, \beta_{2,i}\}$, with $n_i \rightarrow \infty$, $\beta_{1,i}\rightarrow \infty$ and $\beta_{2,i}\rightarrow -\infty$ for $i=1, 2, \ldots$, where the edge-triangle model converges to some Tur\'{a}n graphon specified by the direction of the parameters $(\beta_1, \beta_2)$. Additionally, under the ``reversed'' asymptotics, a detailed categorization of the limiting behavior of the edge-triangle model was obtained: When $\beta$ diverges along the critical direction $o_k$, a typical sampled graph more likely resembles a Tur\'{a}n graphon with $k+2$ classes than with $k+1$ classes. Now that we are equipped with refined perturbation analysis, we would like to sharpen our results under the ``original'' asymptotics and inquire whether the same type of discontinuity in natural parametrization exists.

Let us make a further remark before carrying out the detailed calculations. In the physics literature, people are often interested in cases where the parameter $\beta$ depends on $n$ (some averages need to be satisfied for every $n$). In these models, in place of the normalization constant (free energy density), the relative entropy plays a central role. Analogous (but more complicated) maximization problems and concentration of measure results have been established, which lead to classifications of ensemble equivalence between the microcanonical ensemble and the canonical ensemble. The perturbative methods explored in the current paper are expected to apply in these general parameter situations. In some cases, the perturbation would still be around random-free graphons, and our argument will run through without much adaptation \cite{HMRS}. In some other cases however, the perturbation would be around graphons admitting more intricate structures, and serious future work is needed\cite{GHR,PN}.

\subsection{Perturbation analysis}
Let $\beta=ro_k$. We will compare $\psi^{\beta}(X)$ and $\psi^{\beta}(Y)$, where $X$ is the flattened out graphon close to $f^{K_{k+1}}$ (Tur\'{a}n graphon with $k+1$ classes) and $Y$ is the flattened out graphon close to $f^{K_{k+2}}$ (Tur\'{a}n graphon with $k+2$ classes). Both $X$ and $Y$ are constructed with the optimal perturbation values (\ref{optimal}). We set $u=o_k$ in our calculations below. For $i=1, 2$, we compute $a_i^X$, $b_i^X$, $a_i^Y$, and $b_i^Y$ for $f^{K_{k+1}}$ and $f^{K_{k+2}}$ in (\ref{derivative1}), where $H_1=K_2$ and $H_2=K_3$. As pointed out earlier in Section \ref{approximation}, $a_1^X=b_1^X=a_1^Y=b_1^Y=1$. For notational convenience, from now on we denote $f^{K_{k+1}}$ by $T_{k}$ and $\chi_{[0, 1]^2}-f^{K_{k+1}}$ by $D_{k}$. Then $T_k$ and $D_k$ are indicator functions associated with sets of measure $k/(k+1)$ and $1/(k+1)$, respectively. Reconfirming our assumption in (\ref{derivative1}),
\begin{equation}
\Delta_{K_3} T_k=3\int_{[0, 1]}T_k(x, z)T_k(y, z)dz=\frac{3(k-1)}{k+1}T_k+\frac{3k}{k+1}D_k,
\end{equation}
which gives
\begin{equation}
\label{values}
a_2^X=\frac{3(k-1)}{k+1}, \hspace{.2in} b_2^X=\frac{3k}{k+1},
\end{equation}
\begin{equation*}
a_2^Y=\frac{3k}{k+2}, \hspace{.2in} b_2^Y=\frac{3(k+1)}{k+2}.
\end{equation*}
This yields
\begin{equation}
\label{abs}
\sum_{i=1}^2 a_i^X u_i=\frac{2(k+3)}{k(3k+5)}, \hspace{.2in} \sum_{i=1}^2 b_i^X u_i=-\frac{1}{3k+5},
\end{equation}
\begin{equation*}
\sum_{i=1}^2 a_i^Y u_i=\frac{2}{3k+5}, \hspace{.2in} \sum_{i=1}^2 b_i^Y u_i=-\frac{k+3}{k(3k+5)}.
\end{equation*}

\begin{figure}
\centering
\includegraphics[height=6in, angle=-90]{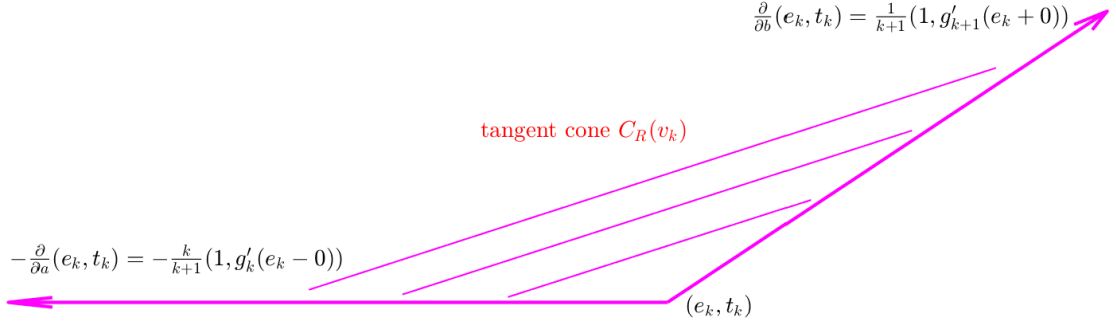}
\caption{The (internal) tangent cone $C_R(v_k)$, consisting of graphons of the type $X_h=aT_{k}+bD_k$. Taking $a=1, b=0$ gives the Tur\'{a}n graphon with $k+1$ classes, whose edge-triangle densities are $v_k=(e_k, t_k)$. Here $g_k'(e_k-0)$ and $g_k'(e_k+0)$ indicate the left and right hand side derivatives of $g_k$ (\ref{Ra}) at $e_k$, respectively.}
\label{cone}
\end{figure}

We check the remainder terms for $\psi^\beta(X)$ after first order perturbation. Similar analysis will work for $\psi^\beta(Y)$ and we skip the details. Consider the (internal) tangent cone $C_R(v_k)$, consisting of graphons of the type $X_h=aT_k+bD_k$. See Figure \ref{cone}. Then
\begin{equation}
\label{edge}
t(K_2, X_h)=\int_{[0, 1]^2}\left(aT_k(x, y)+bD_k(x, y)\right)dxdy=\frac{ak+b}{k+1}.
\end{equation}
For $(x, y) \in [0, 1]^2$, we have
\begin{multline}
\int_{[0, 1]}X_h(x, z)X_h(y, z)dz=\int_{[0, 1]}\left((aT_k(x, z)+bD_k(x, z))(aT_k(y, z)+bD_k(y, z))\right)dz\\
=\left(a^2\frac{k-1}{k+1}+2ab\frac{1}{k+1}\right)T_k(x, y)+\left(a^2\frac{k}{k+1}+b^2\frac{1}{k+1}\right)D_k(x, y),
\end{multline}
which implies that
\begin{equation}
\label{triangle}
t(K_3, X_h)=\int_{[0, 1]^3}X_h(x, y)X_h(x, z)X_h(y, z)dxdydz
\end{equation}
\begin{eqnarray*}
&=&\int_{[0, 1]}\left(aT_k(x, y)+bD_k(x, y)\right)\left(\left(a^2\frac{k-1}{k+1}+2ab\frac{1}{k+1}\right)T_k(x, y)+\left(a^2\frac{k}{k+1}+b^2\frac{1}{k+1}\right)D_k(x, y)\right)dxdy\\\nonumber
&=&a^3\frac{k(k-1)}{(k+1)^2}+3a^2b\frac{k}{(k+1)^2}+b^3\frac{1}{(k+1)^2}. \nonumber
\end{eqnarray*}
This yields
\begin{multline}
\label{exact}
\psi^\beta(X_h)=rt(K_2, X_h)-r\frac{(k+1)(k+2)}{k(3k+5)}t(K_3, X_h)-I(a)\frac{k}{k+1}-I(b)\frac{1}{k+1}\\
=r\frac{ak+b}{k+1}-r\frac{(k+1)(k+2)}{k(3k+5)}\left(a^3\frac{k(k-1)}{(k+1)^2}+3a^2b\frac{k}{(k+1)^2}+b^3\frac{1}{(k+1)^2}\right)-I(a)\frac{k}{k+1}-I(b)\frac{1}{k+1}.
\end{multline}
Using (\ref{optimal}) and (\ref{abs}) for $a$ and $b$, (\ref{exact}) gives $\psi^\beta(X)$. After first order perturbation as in (\ref{flat}), the remainder terms are bounded by
\begin{equation}
\text{constant}\cdot r\cdot (\max\{1-a, b\})^2 \approx \text{constant} \cdot r \cdot \max\big \{\exp\left(-\frac{8r(k+3)}{k(3k+5)}\right), \exp\left(-\frac{4r}{3k+5}\right)\big \}.
\end{equation}
The following lemma is useful for our asymptotic derivation.

\begin{lemma}
\label{approx}
Let $I$ be defined as in (\ref{I}). As $x \rightarrow \infty$,
\begin{equation}
\label{simple1}
-\frac{xe^{-x}}{2(1+e^{-x})}-I(\frac{1}{1+e^{-x}}) \approx \frac{e^{-x}}{2},
\end{equation}
\end{lemma}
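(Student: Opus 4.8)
The plan is to observe that the left-hand side of (\ref{simple1}), despite its appearance, collapses to an elementary closed form, after which the claim is a one-line Taylor expansion. First I would set $u := \frac{1}{1+e^{-x}}$, so that $1-u = \frac{e^{-x}}{1+e^{-x}}$ and, solving for $x$, $x = \log u - \log(1-u)$ (in fact $x = -\log e^{-x}$, but the form in $u$ is what makes the cancellation visible). With this substitution the first summand in (\ref{simple1}) becomes $-\frac{xe^{-x}}{2(1+e^{-x})} = -\frac{x}{2}\,(1-u) = -\frac{1-u}{2}\bigl(\log u - \log(1-u)\bigr)$.

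Next I would insert the definition (\ref{I}) of $I$ evaluated at $u$ and add the two pieces. The $\log(1-u)$ contributions cancel, while the $\log u$ contributions combine as $-\frac{1-u}{2}\log u - \frac{u}{2}\log u = -\frac12\log u$, so that the \emph{entire} left-hand side of (\ref{simple1}) equals $-\frac12\log u = \frac12\log(1+e^{-x})$ \emph{identically} in $x$, with no approximation yet. Since $\log(1+e^{-x}) = e^{-x} + O(e^{-2x})$ as $x\to\infty$, the left-hand side is $\frac{e^{-x}}{2}\bigl(1 + O(e^{-x})\bigr)$, which is exactly (\ref{simple1}) with $\approx$ read as asymptotic equivalence, and in fact yields the sharper expansion $\frac{e^{-x}}{2}-\frac{e^{-2x}}{4}+\cdots$.

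There is no genuinely hard step here; the only thing to watch is the cancellation in the second paragraph. If one instead expands $I\bigl(\frac{1}{1+e^{-x}}\bigr)$ directly in powers of $t=e^{-x}$, a term $\frac{t\log t}{2}$ appears (from $\frac12(1-u)\log(1-u)$ with $\log(1-u)\sim\log t$), and the role of the first summand $-\frac{xe^{-x}}{2(1+e^{-x})} = \frac{t\log t}{2(1+t)}$ is precisely to cancel these logarithmic terms to all orders, leaving the polynomial leading behavior $\frac{t}{2}$. I would therefore present the identity-based argument rather than a brute-force expansion, since it makes the cancellation transparent and hands over all higher-order corrections for free.
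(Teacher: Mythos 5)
Your proposal is correct and is essentially the paper's own proof: the authors likewise observe that the left-hand side simplifies identically to $\tfrac12\log(1+e^{-x})$ and then call the asymptotic $\log(1+e^{-x})\approx e^{-x}$ immediate. You merely spell out the algebraic cancellation that the paper leaves implicit.
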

\noindent and as $x \rightarrow -\infty$,
\begin{equation}
\label{simple2}
\frac{x}{2(1+e^{-x})}-I(\frac{1}{1+e^{-x}}) \approx \frac{e^{x}}{2}.
\end{equation}
\begin{proof}
We recognize that after simplification, the left hand side of (\ref{simple1}) becomes $\log (1+e^{-x})/2$, and the left hand side of (\ref{simple2}) becomes $x/2+\log (1+e^{-x})/2$. The rest is immediate.
\end{proof}

Applying Lemma \ref{approx} to the first order perturbation terms, we see that conforming to our heuristic analysis in Sections \ref{perturb} and \ref{maximizing}, the remainder terms are indeed of negligible order. For $r$ large enough, we have
\begin{equation}
\label{first}
\psi^{\beta}(X) \approx rT^{o_k}(f)+\frac{k}{2(k+1)}\exp\left(-\frac{4r(k+3)}{k(3k+5)}\right)+\frac{1}{2(k+1)}\exp\left(-\frac{2r}{3k+5}\right),
\end{equation}
\begin{equation*}
\psi^{\beta}(Y) \approx rT^{o_k}(f)+\frac{k+1}{2(k+2)}\exp\left(-\frac{4r}{3k+5}\right)+\frac{1}{2(k+2)}\exp\left(-\frac{2r(k+3)}{k(3k+5)}\right).
\end{equation*}
Since $\sum_{i=1}^2 b_i^X u_i=-1/(3k+5)$ has the smallest absolute value among all equations in (\ref{abs}), $\psi^\beta(X)>\psi^{\beta}(Y)$ both in terms of first order perturbation (\ref{first}) and the exact value.

Denote by $\psi^\beta_\text{opt}(X_h)$ and $\psi^\beta_\text{opt}(Y_h)$ the exact optimal value of $\psi^\beta$ within the (internal) tangent cone $C_R(v_k)$ and $C_R(v_{k+1})$, respectively. Let us compare $\psi^\beta_\text{opt}(X_h)$ with $\psi^\beta_\text{opt}(Y_h)$. From our heuristic argument, for any $\epsilon>0$ and $r$ large enough, (\ref{ruZ5}) is satisfied:
\begin{equation}
\psi^{\beta}(X_h) \geq  rT^{o_k}(f)+\left(r\left(\sum_{i=1}^2 a_i^X u_i+\epsilon\right)(a-1)-I(a)\right)\frac{k}{k+1}+\left(r\left(\sum_{i=1}^2 b_i^X u_i-\epsilon\right)b-I(b)\right)\frac{1}{k+1}.
\end{equation}
Following similar reasoning, we also find a bound in the other direction:
\begin{equation}
\psi^{\beta}(X_h) \leq  rT^{o_k}(f)+\left(r\left(\sum_{i=1}^2 a_i^X u_i-\epsilon\right)(a-1)-I(a)\right)\frac{k}{k+1}+\left(r\left(\sum_{i=1}^2 b_i^X u_i+\epsilon\right)b-I(b)\right)\frac{1}{k+1}.
\end{equation}
Maximizing each first variation term and applying Lemma \ref{approx} as previously, this says that $\psi^\beta(X_h)$ is asymptotically bounded below by
\begin{equation}
rT^{o_k}(f)+\frac{k}{2(k+1)}\exp\left(-2r\left(\frac{2(k+3)}{k(3k+5)}+\epsilon\right)\right)+\frac{1}{2(k+1)}\exp\left(-2r\left(\frac{1}{3k+5}+\epsilon\right)\right),
\end{equation}
and above by
\begin{equation}
rT^{o_k}(f)+\frac{k}{2(k+1)}\exp\left(-2r\left(\frac{2(k+3)}{k(3k+5)}-\epsilon\right)\right)+\frac{1}{2(k+1)}\exp\left(-2r\left(\frac{1}{3k+5}-\epsilon\right)\right),
\end{equation}
Similar analysis works for $\psi^\beta(Y_h)$ and we skip the details. Since $\epsilon>0$ can be taken arbitrarily small and $\sum_{i=1}^2 b_i^X u_i=-1/(3k+5)$ has the smallest absolute value among all equations in (\ref{abs}), $\psi^\beta_{\text{opt}}(X_h)>\psi^{\beta}_{\text{opt}}(Y_h)$. As discussed earlier in Section \ref{subgraph}, by Theorems 3.1 and 3.2 in Chatterjee and Diaconis \cite{CD2013}, we conclude that when $\beta$ diverges along the critical direction $o_k$, a typical sampled graph more likely resembles a Tur\'{a}n graphon with $k+1$ classes than with $k+2$ classes. See Table \ref{table1} and Figure \ref{figure1}.

\begin{theorem}
\label{main}
Consider the edge-triangle exponential random graph model, obtained by setting in (\ref{pmf2}) $H_1$ an edge and $H_2$ a triangle. For $k\geq 1$, let $\beta=ro_k$, where $o_k$ is the critical direction (\ref{ok}) and $r$ is sufficiently large. Then in the large $n$ limit, a typical graph drawn from the model behaves like a Tur\'{a}n graphon with $k+1$ classes,
\begin{equation}
\lim_{r\rightarrow \infty} \sup_{\tilde{f} \in \cF^\beta} \delta_{\Box}(\tilde{f}, \tilde{f}^{K_{k+1}})=0.
\end{equation}
\end{theorem}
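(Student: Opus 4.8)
The plan is to establish the displayed limit for the maximizing set $\cF^{\beta}$ and then read off the ``typical graph'' statement from Theorems 3.1 and 3.2 of \cite{CD2013}. Since $\cF^{\beta}$ is exactly the set of maximizers of $\psi^{\beta}=T^{\beta}-I$ on the compact space $(\wt\cW,\delta_{\Box})$, it suffices to show: for every $\epsilon>0$ there is $r_{0}$ so that $r>r_{0}$ forces $\delta_{\Box}(\tilde f,\tilde f^{K_{k+1}})<\epsilon$ for all $\tilde f\in\cF^{ro_{k}}$.

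First I would run the standard energy-versus-entropy localization. Put $M:=\max_{\wt\cW}T^{o_{k}}$. Any $h\in\cF^{\beta}$ satisfies $\psi^{\beta}(h)\ge\psi^{\beta}(f^{K_{k+1}})=rM$, because $f^{K_{k+1}}$ is random-free (so $I$ vanishes on it) and $T^{o_{k}}(f^{K_{k+1}})=M$ since $v_{k}\in L_{k}$; at the same time $\psi^{\beta}(h)\le rT^{o_{k}}(h)+\tfrac12\log 2$ because $-I\in[0,\tfrac12\log 2]$. Hence $M-T^{o_{k}}(h)\le\tfrac{\log 2}{2r}$, and with continuity of $T^{o_{k}}$ and compactness of $\wt\cW$ this drives $\cF^{\beta}$ into any prescribed $\delta_{\Box}$-neighborhood of $\cK^{o_{k}}$ once $r$ is large. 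I would then identify $\cK^{o_{k}}$: $T^{o_{k}}$ is linear in the edge and triangle densities and, by the convex-hull geometry of Figure \ref{et}, attains its maximum over the feasible region $R$ exactly along the edge $L_{k}$; since $R$ meets the convex hull only at Tur\'an vertices, and by the standing assumption $\cK^{o_{k}}$ consists of random-free graphons whose densities on $L_{k}$ can only be $v_{k}$ or $v_{k+1}$ (at which the Razborov extremizer is the unique equipartition Tur\'an graphon), it follows that $\cK^{o_{k}}=\{\tilde f^{K_{k+1}},\tilde f^{K_{k+2}}\}$. Fixing $\epsilon$ small enough that the two $\epsilon$-balls are disjoint, each $\tilde f\in\cF^{\beta}$ is $\epsilon$-close to exactly one of these two classes, and the remaining task is to exclude $\tilde f^{K_{k+2}}$.

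So suppose $\tilde h\in\cF^{\beta}$ with $\delta_{\Box}(\tilde h,\tilde f^{K_{k+2}})<\epsilon$, and pick a representative with $d_{\Box}(h,f^{K_{k+2}})<\epsilon$, hence $\|h-f^{K_{k+2}}\|_{1}=O(\epsilon)$ by Proposition \ref{prop}. Forming the flattened graphon $Y_{h}$ with optimal parameters \eqref{optimal}, the computation of Section \ref{maximizing} together with the data \eqref{abs} for $f^{K_{k+2}}$ writes $\psi^{\beta}(h)$ as $rM$, plus the two first-variation terms controlled by $\sum_{i}a_{i}^{Y}u_{i}=\tfrac{2}{3k+5}$ and $\sum_{i}b_{i}^{Y}u_{i}=-\tfrac{k+3}{k(3k+5)}$, minus the nonnegative entropy defect $I(h)-I(Y_{h})\ge0$ (Jensen, since the cells are averaged), plus homomorphism-density remainder terms; maximizing the first-variation terms via Lemma \ref{approx} yields the asymptotics of $\psi^{\beta}_{\text{opt}}(Y_{h})$ recorded in \eqref{first}. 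On the other hand the explicit graphon $X$ built near $f^{K_{k+1}}$ obeys $\psi^{\beta}(X)-rM\sim\tfrac{1}{2(k+1)}e^{-2r/(3k+5)}$ by \eqref{first} and Lemma \ref{approx}. Because $|\sum_{i}b_{i}^{X}u_{i}|=\tfrac{1}{3k+5}$ is strictly the smallest of the four magnitudes occurring in \eqref{abs}, this correction decays strictly more slowly than every correction term attached to either $X$ or $Y_{h}$; so, once the remainder terms for $h$ are controlled as below, $\psi^{\beta}(h)<\psi^{\beta}(X)\le\max_{\wt\cW}\psi^{\beta}$ for $r$ large, contradicting $h\in\cF^{\beta}$. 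Thus $\cF^{\beta}$ is contained in the $\epsilon$-ball about $\tilde f^{K_{k+1}}$, which is the claimed limit, and the ``typical graph'' conclusion follows from Theorems 3.1 and 3.2 of \cite{CD2013}.

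The step I expect to be the genuine difficulty is precisely the control, for a \emph{maximizer} $h$ rather than for its flattened surrogate, of the triangle-density remainder, i.e.\ the proof that $\psi^{\beta}(h)\le\psi^{\beta}_{\text{opt}}(Y_{h})+o\big(e^{-2r/(3k+5)}\big)$. The first variations are exact and the entropy defect has the favorable sign, but the remainder for $h$ is a priori dominated only by the $2$-star density of $|h-f^{K_{k+2}}|$, which by \cite{AK} is at most $\tfrac12\|h-f^{K_{k+2}}\|_{1}+O(\|h-f^{K_{k+2}}\|_{1}^{2})$, i.e.\ of the same order as a first variation after multiplication by $r$. Closing this gap rigorously calls for the structure of maximizers: by \cite{CD2013} each maximizer is uniformly bounded away from $0$ and $1$, and the Euler--Lagrange fixed-point relation (expressing $h$ as a sigmoid of $\sum_{i}u_{i}\Delta_{H_{i}}h$) forces $h$ to be nearly two-valued across the Tur\'an cells once $r$ is large, so that $|h-Y_{h}|$, and hence the relevant $2$-star contribution, is genuinely second order. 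Short of invoking this regularity, one has — as the paper emphasizes — the heuristic of Section \ref{perturb} backed by the simulations of \cite{KRRS2}.
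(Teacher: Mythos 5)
Your argument follows essentially the same route as the paper: localize $\cF^\beta$ near $\cK^{o_k}=\{\tilde f^{K_{k+1}},\tilde f^{K_{k+2}}\}$ by the energy--entropy bound, compare the flattened two-parameter graphons through the first-variation data (\ref{abs}), Lemma \ref{approx}, and the fact that $|\sum_i b_i^X u_i|=1/(3k+5)$ is the smallest exponent, then conclude via Theorems 3.1 and 3.2 of \cite{CD2013}. The remainder-control issue for the true maximizer $h$ that you flag is exactly the point the paper itself treats heuristically (the $2$-star/anticlique discussion of Section \ref{perturb}, the tangent-cone representation claim, and the simulations of \cite{KRRS2}), so your proposal matches, rather than falls short of, the paper's own argument.
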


\begin{table}
\begin{center}
\begin{tabular}{c||cccccc}
 & $a$ & $b$ & $\psi^{\beta}$ & $a_{\text{opt}}$ & $b_{\text{opt}}$ & $\psi^\beta_{\text{opt}}$ \\
\hline \hline \\
bipartite & 0.999999998 & $0.0759$ & 5.0197 & \text{almost} 1 & 0.069 & 5.019 \\
tripartite & $0.9933$ & $0.0000454$ & 5.0022 & 0.9943 & 0.000064 & 5.0021 \\ \\
\end{tabular}
\end{center}
\caption{Asymptotic comparison for perturbation around neighboring Tur\'{a}n graphons, where $\beta=(10, -7.5)$ diverges along the critical direction $o_1=(1, -0.75)$. $a$, $b$ and $\psi^{\beta}$ are calculated from (\ref{optimal}) and (\ref{first}) with first order perturbation; $a_{\text{opt}}$, $b_{\text{opt}}$ and $\psi^\beta_{\text{opt}}$ are based on numerical optimization for (\ref{exact}). The bipartite feature is associated with a bigger limiting normalization constant and is favored, matching the asymptotic predictions of Theorem \ref{main}.} \label{table1}
\end{table}

\subsection{Geometric interpretation}

We proceed further and examine the effect of infinitesimal perturbation on the associated edge and triangle densities. From (\ref{edge}) and (\ref{triangle}),
\begin{equation}
\label{tangent}
\left.\frac{\partial}{\partial b} \left(t(K_2, X_h), t(K_3, X_h)\right)\right|_{a=1, b=0} = \left( \frac{1}{k+1},  \frac{3k}{(k+1)^2}\right),
\end{equation}
\begin{equation*}
             \left.\frac{\partial}{\partial a} \left( t(K_2, X_h), t(K_3, X_h)\right)\right|_{a=1, b=0} = \left( \frac{k}{k+1},  \frac{3k(k-1)}{(k+1)^2}\right).
\end{equation*}
Let us present another perspective on this calculation incorporating assumption (\ref{derivative1}), which may be employed to derive infinitesimal variations for more complicated homomorphism densities. The idea appeared in our heuristic analysis before and we make it explicit here. For any $H_i$ so that (\ref{derivative1}) is satisfied,
\begin{multline}
\left.\frac{\partial}{\partial b} t(H_i, X_h)\right|_{a=1, b=0}=\int_{[0, 1]^2} D_k(x, y)\Delta_{H_i}T_k(x, y)dxdy\\
=\int_{[0, 1]^2} D_k(x, y)\left(a_iT_k(x, y)+b_iD_k(x, y)\right)dxdy=b_i\frac{1}{k+1}.
\end{multline}
\begin{multline}
\left.\frac{\partial}{\partial a} t(H_i, X_h)\right|_{a=1, b=0}=\int_{[0, 1]^2} T_k(x, y)\Delta_{H_i}T_k(x, y)dxdy\\
=\int_{[0, 1]^2} T_k(x, y)\left(a_iT_k(x, y)+b_iD_k(x, y)\right)dxdy=a_i\frac{k}{k+1}.
\end{multline}
Using $a_1=b_1=1$, $a_2=3(k-1)/(k+1)$ and $b_2=3k/(k+1)$ (\ref{values}), we recover the partial derivatives calculated above. In particular, we recognize that $-(a_1, a_2)$ points along the left tangent line and $(b_1, b_2)$ points along the right tangent line, while the critical direction $u=o_k$ is the normal vector to the line segment $L_k$ that connects neighboring vertices $v_k$ and $v_{k+1}$. This offers a geometric justification of the ($\pm$) signs in (\ref{abs}).

\begin{figure}
\centering
\includegraphics[width=4in]{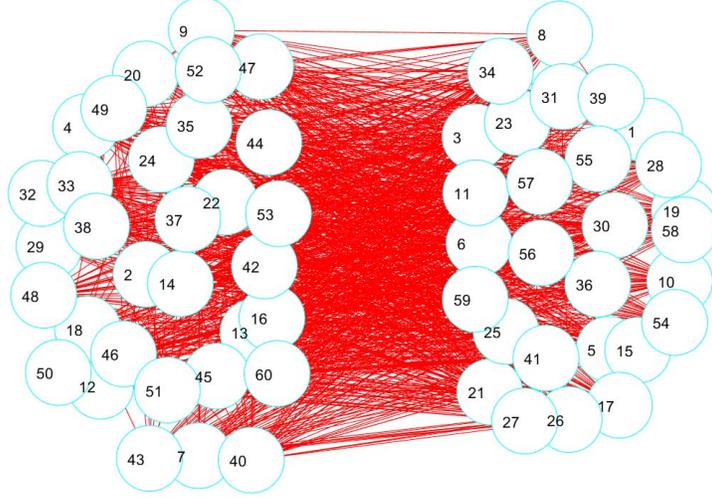}
\caption{A simulated realization of the exponential random graph model on $60$ nodes with edges and triangles as sufficient statistics, where $\beta=(10, -7.5)$ diverges along the critical direction $o_1=(1, -0.75)$. $n$ is of much bigger magnitude than $\beta$ in reflection of the double asymptotic order. The simulated graph displays bipartite feature with edge density $0.557$, matching the asymptotic predictions of Theorem \ref{main}.}
\label{figure1}
\end{figure}

Recall that the lower boundary of attainable edge-triangle densities is a piecewise algebraic curve with infinitely many concave
pieces $g_k$ (\ref{Ra}), and the connection point of $g_k$ and $g_{k+1}$ is $v_k=(e_k, t_k)$. See Figure \ref{et}. We compute
\begin{equation}
g_k'(e)=\frac{3(k-1)}{k(k+1)}\left(k+\sqrt{k(k-e(k+1))}\right),
\end{equation}
which implies that the left and right hand side derivatives at $e_k=k/(k+1)$ are respectively given by
\begin{equation}
g_k'(e_k-0)=\frac{3(k-1)}{k+1}, \hspace{.2in} g_{k+1}'(e_k+0)=\frac{3k}{k+1}.
\end{equation}
The partial derivative vectors in (\ref{tangent}) thus delineate the boundary of the (internal) tangent cone $C_R(v_k)$. In other words, $C_R(v_k)$ spans all possible infinitesimal variations at the Tur\'{a}n graphon with $k+1$ classes. See Figure \ref{cone}. Since the optimizing graphon associated with large enough $\beta$ must lie within the tangent cone of some Tur\'{a}n graphon, it may be represented by a linear combination of Erd\H{o}s-R\'{e}nyi and Tur\'{a}n graphons. Even though the graphon representation may not be unique, optimizing over all possible combinations provides insight into the structure of the maximizing set. Keeping track of the Erd\H{o}s-R\'{e}nyi and Tur\'{a}n characteristics in the edge-triangle model is thus not only an effective but also sufficient method to estimate the normalization constant, and gives evidence of discontinuity of the natural parametrization along the critical directions $o_k$ in the limit as $n$ and then $r$ tend to infinity. This demonstrates the occurrence of discontinuous phase transitions in the edge-triangle model.

\section*{Acknowledgements}
The authors are very grateful to the anonymous referee for the invaluable
suggestions that greatly improved the quality of this paper. Mei Yin thanks Sukhada Fadnavis for helpful conversations. Rajinder Mavi was supported by a postdoctoral fellowship from the Michigan State University Institute for Theoretical and Mathematical Physics. Mei Yin's research was partially supported by NSF grant DMS-1308333.


\end{document}